%
%

\documentclass[amscd,amssymb,verbatim,11pt]{amsart}
\usepackage{epsfig}

\newtheorem{theorem}{Theorem}[section]







\begin{document}

\title{The finite Fourier transform of classical polynomials}


\author[A. Dixit]{Atul Dixit}
\address{Department of Mathematics,
Tulane University, New Orleans, LA 70118}
\email{adixit@tulane.edu}

\author[L. Jiu]{Lin Jiu}
\address{Department of Mathematics,
Tulane University, New Orleans, LA 70118}
\email{ljiu@tulane.edu}

\author[V. H. Moll]{Victor H. Moll}
\address{Department of Mathematics,
Tulane University, New Orleans, LA 70118}
\email{vhm@tulane.edu}

\author[C. Vignat]{Christophe Vignat}
\address{Department of Mathematics,
Tulane University, New Orleans, LA 70118}
\email{cvignat@math.tulane.edu}

\subjclass{Primary 42A16, Secondary 33C45}

\date{\today}

\keywords{Fourier transform, orthogonal polynomials, Jacobi polynomials, }

\begin{abstract}
The finite Fourier transform of a family of  orthogonal polynomials $A_{n}(x)$, is the 
usual transform of the polynomial extended 
by $0$ outside their natural domain. Explicit expressions are given for the Legendre, 
Jacobi, Gegenbauer and Chebyshev families. 
\end{abstract}

\maketitle

\newcommand{\nn}{\nonumber}
\newcommand{\ba}{\begin{eqnarray}}
\newcommand{\ea}{\end{eqnarray}}
\newcommand{\no}{\noindent}
\newcommand{\la}{\lambda}
\newcommand{\ch}{}
\newcommand{\realpart}{\mathop{\rm Re}\nolimits}
\newcommand{\imagpart}{\mathop{\rm Im}\nolimits}
\newcommand{\pFq}[5]{\ensuremath{{}_{#1}F_{#2} \left( \genfrac{}{}{0pt}{}{#3}
{#4} \bigg| {#5} \right)}}

\newtheorem{Definition}{\bf Definition}[section]
\newtheorem{Thm}[Definition]{\bf Theorem}
\newtheorem{Example}[Definition]{\bf Example}
\newtheorem{Lem}[Definition]{\bf Lemma}
\newtheorem{Note}[Definition]{\bf Note}
\newtheorem{Cor}[Definition]{\bf Corollary}
\newtheorem{Conj}[Definition]{\bf Conjecture}
\newtheorem{Prop}[Definition]{\bf Proposition}
\newtheorem{Problem}[Definition]{\bf Problem}
\numberwithin{equation}{section}

\section{Introduction} \label{sec-intro}
\setcounter{equation}{0}

Compendia of formulas, such as the classical \textit{Table of {I}ntegrals, {S}eries and 
{P}roducts} by I.~S.~Gradshteyn and I.~M.~Ryzhik \cite{gradshteyn-2007a} 
and the recent \textit{NIST {H}andbook of 
{M}athematical {F}unctions} \cite{olver-2010a} do not contain a systematic collection of 
Fourier transforms of orthogonal polynomials. 

Special cases do appear. For instance, \cite[formula $18.17.19$]{olver-2010a} contains the identity
\begin{equation}
\int_{-1}^{1} P_{n}(x) e^{\imath \la x} dx = \imath^{n} \sqrt{ \frac{2 \pi}{\la}} J_{n+\tfrac{1}{2}}(\la),
\end{equation}
\noindent
for the \textit{finite Fourier transform} of the Legendre polynomial $P_{n}$. Here $J_{\alpha}$ is 
the Bessel function defined by 
\begin{equation}
J_{\alpha}(\la) = \sum_{k=0}^{\infty} \frac{(-1)^{k} (\la/2)^{2k+ \alpha}}{k! \Gamma(k+ \alpha + 1)}.
\end{equation}
\noindent
A second example is \cite[formula $3.3(7)$, page 123]{erderly-1954a}
\begin{equation}
\int_{-1}^{1} P_{\nu}(x) e^{\imath \la x} dx = \frac{2 \pi \sin \pi \nu}{\nu(\nu+1)} e^{- \imath \la} 
\pFq22{1,1}{- \nu, 2 + \nu}{2 \imath \la}.
\end{equation}

The more natural situation, where the corresponding kernel appears in the integrand, is included 
in the tables. For instance, for the Jacobi polynomial, \cite[18.17.16]{olver-2010a} gives 
\begin{equation}
\label{jacobi-101}
\int_{-1}^{1} (1-x)^{\alpha}(1+x)^{\beta}  P_{n}^{(\alpha, \beta)}(x) e^{ \imath \la x} \, dx   = 
X_{n}(\la;\alpha,\beta) \pFq11{n+\alpha + 1}{2n+\alpha + \beta + 2}{-2 \imath \la}, 
\end{equation}
\noindent
with 
\begin{equation}
X_{n}(\la;\alpha,\beta) = \frac{(\imath \la)^{n} e^{\imath \la}}{n!} 2^{n+ \alpha + \beta +1}  \times B(n + \alpha + 1, n + \beta + 1) 
\end{equation}

The work presented here was stimulated by results of A.~Fokas et al. \cite{fokas-2014a}. A second 
motivation was the fact that the authors were unable to find the 
finite Fourier transform of classical orthogonal polynomials readily available in the 
literature. These results were also developed in \cite{fokas-2014a} 
and some of them  appear in \cite{greenen-2008a}. The authors wish to thank 
A. Fokas and T. Koorwinder for correspondence on the questions discussed here.

The goal of this project  is to produce  closed-form evaluations of definite 
integrals of the form 
\begin{equation}
\widehat{P}(\la):= \int_{a}^{b} e^{\imath \la x} P(x) \, dx 
\end{equation}
\noindent 
for a variety of polynomials $P$, orthogonal on the interval $[a,b]$. The function $\widehat{P}(\la)$ is 
called the \textit{finite Fourier transform} of the polynomial $P$. The case considered here includes 
the Legendre polynomial $P_{n}(x)$,  the Jacobi polynomial $P_{n}^{(\alpha, \beta)}(x)$, from which 
the Gegenbauer polynomials $C_{n}^{(\nu)}(x)$ and both types of Chebyshev polynomials 
$T_{n}(x)$ and $U_{n}(x)$ are derived. 

Naturally, depending on the representation given of the polynomial $P$, it is possible 
to obtain a variety of expressions for $\widehat{P}$.  For instance, if an expression 
for the coefficients of $P$ is available, the identity in Lemma \ref{lemma-intpower} 
and a simple scaling give directly a double-sum representation for $\widehat{P}(\la)$. 

\smallskip

It is convenient to introduce the notation 
\begin{equation}
E_{n}(x) = \sum_{j=0}^{n} \frac{x^{j}}{j!}
\end{equation}
\noindent 
for the partial sums of the exponential function. Many of the results may be 
expressed in terms of $E_{n}$. The result is elementary and it appears in 
\cite[formula $2.323$]{gradshteyn-2007a}.

\begin{Lem}
\label{lemma-intpower}
Let $k \geq 0$ be an integer and $\la$ an indeterminate. Then,
\begin{equation}
\int_{-1}^{1} x^{k} e^{\imath \la  x} dx =\frac{(-1)^{k}  k!}{(\imath \la)^{k+1} }
\left[ e^{\imath \la} E_{k}(- \imath \la) -e^{- \imath \la} E_{k}(\imath \la) \right], \ch
\end{equation}
\noindent 
and 
\begin{equation}
\int_{0}^{1} x^{k} e^{\imath \la  x} dx =\frac{(-1)^{k}  k!}{(\imath \la)^{k+1} }
\left[ e^{\imath \la} E_{k}(- \imath \la) - 1  \right]. \ch
\end{equation}
\end{Lem}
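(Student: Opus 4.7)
The plan is to prove both identities by exhibiting an explicit antiderivative of $x^{k} e^{\imath \la x}$. Because the integrand is the product of a polynomial and an exponential, one expects an antiderivative of the form $Q(x) e^{\imath \la x}$ with $Q$ a polynomial of degree $k$. The differentiation condition $\imath \la\, Q(x) + Q'(x) = x^{k}$, solved by iteration, yields
\begin{equation*}
Q(x) = \sum_{j=0}^{k} \frac{(-1)^{j}\, k!}{(k-j)!\,(\imath \la)^{j+1}}\, x^{k-j}.
\end{equation*}

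For the second identity, evaluate $Q$ at the endpoints of $[0,1]$. At $x=0$ only the $j=k$ term survives, so $Q(0) = (-1)^{k} k!/(\imath \la)^{k+1}$. At $x=1$, the reindexing $m = k-j$ together with $(-1)^{m}(\imath\la)^{m} = (-\imath\la)^{m}$ collapses the sum to $Q(1) = \frac{(-1)^{k} k!}{(\imath \la)^{k+1}} E_{k}(-\imath \la)$. Hence
\begin{equation*}
\int_{0}^{1} x^{k} e^{\imath \la x}\, dx = e^{\imath \la} Q(1) - Q(0) = \frac{(-1)^{k} k!}{(\imath \la)^{k+1}}\bigl[ e^{\imath \la} E_{k}(-\imath \la) - 1\bigr],
\end{equation*}
which is the second formula.

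The first identity then follows by splitting $\int_{-1}^{1} = \int_{-1}^{0} + \int_{0}^{1}$ and, on the first piece, substituting $y = -x$ to obtain $(-1)^{k} \int_{0}^{1} y^{k} e^{-\imath \la y}\, dy$. Applying the $[0,1]$ identity just established with $\la$ replaced by $-\la$, and using $(-\imath \la)^{k+1} = (-1)^{k+1}(\imath \la)^{k+1}$, this piece simplifies to $-\frac{(-1)^{k} k!}{(\imath \la)^{k+1}}\bigl[ e^{-\imath \la} E_{k}(\imath \la) - 1\bigr]$. Adding to the $[0,1]$ piece, the constants $\pm 1$ cancel and the claimed expression in the first identity emerges.

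The only mildly delicate point is sign bookkeeping: in the reindexing that converts the antiderivative sum into $E_{k}$, and in matching $(-\imath \la)^{k+1}$ with $(\imath \la)^{k+1}$ when the $[0,1]$ identity is re-applied with $\la \to -\la$. Otherwise the argument is mechanical. A direct induction on $k$ via integration by parts with $u = x^{k}$, $dv = e^{\imath \la x}\, dx$, using $E_{k}(x) = E_{k-1}(x) + x^{k}/k!$, would give an equivalent proof.
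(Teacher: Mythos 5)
Your proof is correct and amounts to the same computation the paper performs: the explicit antiderivative $Q(x)e^{\imath\la x}$ you construct from $Q'+\imath\la Q=x^{k}$ is precisely what iterated integration by parts produces, which is the paper's entire stated proof. The endpoint evaluations, the reindexing to $E_{k}$, and the sign bookkeeping under $\la\to-\la$ all check out.
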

\begin{proof}
Integrate by parts. 
\end{proof}

\begin{Note}
The notation is standard. The symbol $(a)_{n}$ denotes the shifted factorial, defined by  
$(a)_{n} = a(a+1) \cdots (a+n-1)$ and $(a)_{0} = 1$. The elementary 
properties 
\begin{eqnarray}
(1)_{n} & = & n!  \label{poch-110} \\
(a)_{n} & = &  \frac{\Gamma(a+n)}{\Gamma(a)} \label{poch-111} \\
( a + \tfrac{1}{2} )_{n} & = &  \frac{(2a)_{2n}}{2^{2n} (a)_{n}}    \label{poch-3}  \\
(-n)_{k} & = & \frac{(-1)^{k}n!}{(n-k)!} \qquad \text{ for }n, \, k \in \mathbb{N}    \label{poch-112}   \\
(n+1)_{k} & = & \frac{(n+k)!}{n!} \qquad \text{ for }n, \, k \in \mathbb{N},      \label{poch-113} \\
(-a)_{n} & = & (-1)^{n} (a-n+1)_{n}, \label{poch-114}
\end{eqnarray}
\noindent
are used throughout. 
\end{Note}

\section{Legendre polynomials}
\label{sec-legendre}

This section contains a variety of formulas for the finite Fourier transform of the Legendre polynomials
$P_{n}(x)$. These are orthogonal polynomials on the interval $[-1,1]$, with weight $w(x) \equiv 1$. 
The next theorem gives all the results. 

\begin{Thm}
\label{fourier-legendre}
The finite Fourier transform of the Legendre polynomial $P_{n}(x)$ is given by one of the 
four equivalent forms: 

\smallskip

\begin{eqnarray*}
\widehat{P_{n}}(\la) & = & 
2^{n} \sum_{k=0}^{n} \binom{n}{k} \binom{\tfrac{1}{2}(n+k-1) }{n} 
\frac{(-1)^{k} k! }{(\imath \la)^{k+1}} 
\left[ e^{\imath \la} E_{k}(- \imath \la) - e^{- \imath \la } E_{k}(\imath \la) \right]  \ch \\
& = &  \imath^{n} \sqrt{\frac{2 \pi}{\la}} J_{n+1/2}(\la)  \ch \\
& = & 2 \sum_{k=0}^{n} \frac{(n+k)!}{(n-k)! \, k! } 
\frac{\left[e^{-\imath \la}E_{k}(2 \imath \la) - e^{\imath \la} \right]}{(-2 \imath \lambda)^{k+1} }  \ch \\
& = & 2 \sum_{k=0}^{n} \frac{(n+k)!}{(n-k)! \, k!} 
\frac{\left[ (-1)^{n+k} e^{ -\imath \lambda} - e^{ \imath \lambda} \right]}{(-2 \imath \la)^{k+1}}.  \ch
\end{eqnarray*}

\end{Thm}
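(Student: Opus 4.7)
The plan is to obtain the four expressions from four different representations of $P_n(x)$, combined with either the elementary integrals of Lemma~\ref{lemma-intpower} or, for the Bessel form, a standard Poisson integral. The first two forms are essentially direct; the third follows from a Taylor expansion at $x=1$ together with a substitution; the fourth is the third after a Chu--Vandermonde collapse.

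\textbf{Forms one and two.} For the first form, I would start from the explicit power-series expansion $P_n(x) = 2^n \sum_{k=0}^n \binom{n}{k}\binom{(n+k-1)/2}{n} x^k$ and apply the first identity of Lemma~\ref{lemma-intpower} to each monomial $x^k$ on $[-1,1]$; collecting the coefficients yields the first expression verbatim. For the second form, I would use the Rodrigues formula $P_n(x) = \frac{1}{2^n\, n!}\frac{d^n}{dx^n}(x^2-1)^n$ and integrate by parts $n$ times in $\widehat{P_n}(\la)$. Every boundary term vanishes because $(x^2-1)^n$ has a zero of order $n$ at $x=\pm 1$, leaving $\widehat{P_n}(\la) = \frac{(\imath \la)^n}{2^n\, n!}\int_{-1}^1 (1-x^2)^n e^{\imath \la x}\,dx$. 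Comparing with the Poisson representation $J_{\nu}(\la) = \frac{(\la/2)^{\nu}}{\sqrt{\pi}\,\Gamma(\nu+\tfrac{1}{2})}\int_{-1}^1 (1-t^2)^{\nu-1/2} e^{\imath \la t}\,dt$ at $\nu = n+\tfrac{1}{2}$ identifies the remaining integral with a constant multiple of $J_{n+1/2}(\la)$, producing the Bessel expression.

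\textbf{Third form.} I would expand around $x=1$, using $P_n(x) = \sum_{k=0}^n \binom{n}{k}\binom{n+k}{k}\left(\tfrac{x-1}{2}\right)^k$. The substitution $x = 1-2v$ turns $\int_{-1}^1 (x-1)^k e^{\imath \la x}\,dx$ into a constant times $\int_0^1 v^k e^{-2\imath \la v}\,dv$, which the second identity of Lemma~\ref{lemma-intpower} evaluates in terms of $E_k(2\imath\la)$. Combining factors and using $\binom{n}{k}\binom{n+k}{k}k! = (n+k)!/((n-k)!\,k!)$ then gives the third form.

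\textbf{Fourth form.} This is where the main work lies. Expanding $E_k(2\imath\la) = \sum_{j=0}^k (2\imath\la)^j/j!$ in the third form and interchanging the order of summation, set $m = k-j$; the outer sum runs over $m \in \{0,\dots,n\}$ while the inner sum over $k$ from $m$ to $n$ assumes the shape of a terminating ${}_2F_1(n+m+1,\,-(n-m);\,m+1;\,1)$. The Chu--Vandermonde identity evaluates it in closed form to $(-n)_{n-m}/(m+1)_{n-m} = (-1)^{n-m}$, and a short bookkeeping of the binomial prefactors shows that the coefficient of each $(-2\imath\la)^{-(m+1)}$ matches exactly the coefficient appearing in the fourth form, while the $e^{\imath\la}$ contribution already coincides. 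The principal obstacle is recognizing and simplifying this Chu--Vandermonde sum cleanly; every other step is routine once the appropriate expansion of $P_n$ is selected.
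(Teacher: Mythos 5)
Your proposal is correct and follows essentially the same route as the paper: the explicit power expansion plus Lemma~\ref{lemma-intpower} for the first form, Rodrigues plus the Poisson-type integral (G\&R $3.387.2$) for the Bessel form, the expansion in powers of $(1-x)/2$ with the substitution to $[0,1]$ for the third, and a Vandermonde collapse for the fourth. The only cosmetic difference is in the last step, where you interchange sums and invoke Chu--Vandermonde for a terminating ${}_2F_1$ at $1$, while the paper reduces to an equivalent polynomial identity and proves the resulting binomial identity (Lemma~\ref{binomial-id}) by the same Vandermonde convolution.
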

\begin{proof}
The first formula follows from the explicit representation 
\begin{equation}
P_{n}(x) = 2^{n} \sum_{k=0}^{n} \binom{n}{k} \binom{ \tfrac{1}{2}(n+k-1)}{n} x^{k} \ch
\end{equation}
\noindent
given in ?? and Lemma \ref{lemma-intpower}. The second expression for $\widehat{P}_{n}(\la)$ 
comes from their Rodrigues formula
\begin{equation}
P_{n}(x) = \frac{1}{2^{n} \, n!} \left( \frac{d}{dx} \right)^{n} (x^{2}-1)^{n},  \ch
\end{equation}
(see \cite[Formula $8.910.2$]{gradshteyn-2007a}) and it appears as  entry $7.242.5$ in \cite{gradshteyn-2007a}. Then
\begin{equation}
\widehat{P}_{n}(\lambda)  = \frac{1}{2^{n}n!} \int_{-1}^{1} e^{\imath  \la x} \left( \frac{d}{dx} \right)^{n} (x^{2}-1)^{n} \, dx
\end{equation}
\noindent
and integrating  by parts $n$-times yields 
\begin{equation}
\widehat{P}_{n}(\lambda)  = \frac{ (-\imath \lambda)^{n}}{2^{n} n!} \int_{-1}^{1} (x^{2}-1)^{n} e^{\imath \la x  }  \, dx.   \ch
\end{equation}
\noindent
Entry $3.387.2$ of \cite{gradshteyn-2007a} states that 
\begin{equation}
\int_{-1}^{1} (1-x^{2})^{\nu-1} e^{ \imath \mu x} \, dx = \sqrt{\pi} 
\left( \frac{2}{\mu} \right)^{\nu - \tfrac{1}{2}} \Gamma(\nu) J_{\nu - \tfrac{1}{2}}(\mu).  \ch
\end{equation}
\noindent
The result is obtained by choosing  $\mu =  \la$ and $\nu = n+1$.

\smallskip

The third form of the finite Fourier transform of the Legendre polynomials is obtained from their 
hypergeometric representation
 \begin{equation}
P_{n}(x)  =   \pFq21{-n \quad n+1}{1}{\frac{1-x}{2}}  
       =  \sum_{k=0}^{n} \frac{ (-n)_{k} (n+1)_{k}}{(1)_{k} \,  k!} \left( \frac{1-x}{2} \right)^{k}, \ch
       \label{hyper-legendre} \\
\end{equation}
\noindent
that gives

\begin{equation}
\widehat{P}_{n}(\lambda) = \sum_{k=0}^{n} \frac{(-n)_{k}(n+1)_{k}}{k!^{2}} \int_{-1}^{1} e^{ \imath \lambda x} 
\left( \frac{1 - x}{2} \right)^{k} \, dx.  \ch
\end{equation}
\noindent
A change of variables and the formulas  \eqref{poch-113} and \eqref{poch-114} give 
\begin{equation}
\label{sum-1}
\widehat{P}_{n}(\lambda) = 2 e^{\imath \lambda} \sum_{k=0}^{n} \frac{(-1)^{k} (n+k)!}{(n-k)! k!^{2}}
\int_{0}^{1} t^{k} e^{-2 \imath \lambda t} \, dt.  \ch
\end{equation}
\noindent
Lemma \ref{lemma-intpower} now gives the stated result. 

\smallskip

To produce the last form  for $\widehat{P}_{n}(\lambda)$, let 
$t = 2 \imath \lambda$ in the third expression for this transform. Then,  after 
multiplication by $t^{n}$ and some simplification, the 
claim is equivalent to the polynomial identity
\begin{equation}
\sum_{k=0}^{n} \frac{(2n-k)!}{k!(n-k)!} (-1)^{k}t^{k} \sum_{j=0}^{n-k} \frac{t^{j}}{j!} = 
\sum_{k=0}^{n} \frac{(2n-k)!}{k! (n-k)!} t^{k}. \ch
\end{equation}
\noindent
To simplify the sum, let $\nu = k+j$ on the left-hand side to show that the desired identity 
is equivalent to 
\begin{equation}
\sum_{\nu=0}^{n} \left[ \sum_{k=0}^{\nu} \frac{(-1)^{k} (2n-k)!}{k! \, (n-k)! \, (\nu-k)!} \right] t^{\nu} 
= \sum_{k=0}^{n} \frac{(2n-k)!}{k! \, (n-k)!} t^{k}.
\end{equation}
Matching coefficients, the result follows from 
\begin{equation}
\sum_{j=0}^{k} \frac{(-1)^{j} (2n-j)!}{j! \, (n-j)! \, (k-j)!} = \frac{ (2n-k)!}{k! \, (n-k)!}  \ch
\end{equation}
\noindent
for every $0 \leq k \leq n$.  This is equivalent to the binomial identity given in 
 Lemma \ref{binomial-id} below. The proof is complete.
\end{proof}

\begin{Lem}
\label{binomial-id}
For $n \in \mathbb{N}$ and $0 \leq k \leq n$
\begin{equation}
\sum_{j=0}^{k} (-1)^{j} \binom{n}{j} \binom{2n-j}{2n-k} = \binom{n}{k}. \ch
\label{binomial-1}
\end{equation}
\end{Lem}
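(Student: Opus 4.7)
The plan is to give a slick generating-function proof. First I would rewrite the right binomial coefficient symmetrically as
\[
\binom{2n-j}{2n-k}=\binom{2n-j}{k-j},
\]
valid because $(2n-j)-(2n-k)=k-j$. Under the standard convention $\binom{a}{b}=0$ for $b<0$, the summand vanishes whenever $j>k$, so we may extend the range of summation from $j=0$ up to $j=n$ without changing the value. Extending the range is the key preparatory move, because it allows the $j$-sum to be recognized as a binomial theorem in disguise.

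Next I would recognize $\binom{2n-j}{k-j}$ as the coefficient $[x^{k-j}](1+x)^{2n-j}$, so that the left-hand side of (\ref{binomial-1}) becomes a single coefficient extraction:
\[
\sum_{j=0}^{n}(-1)^{j}\binom{n}{j}\binom{2n-j}{k-j}
=[x^{k}]\sum_{j=0}^{n}\binom{n}{j}(-x)^{j}(1+x)^{2n-j}.
\]
Factoring $(1+x)^{2n}$ out of the inner sum and pulling the remaining $(1+x)^{-j}$ into the $j$-th term gives
\[
[x^{k}](1+x)^{2n}\sum_{j=0}^{n}\binom{n}{j}\!\left(\frac{-x}{1+x}\right)^{\!j}
=[x^{k}](1+x)^{2n}\!\left(1-\frac{x}{1+x}\right)^{\!n}
=[x^{k}](1+x)^{2n}(1+x)^{-n}.
\]

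Finally, this simplifies to $[x^{k}](1+x)^{n}=\binom{n}{k}$, which is the desired right-hand side of (\ref{binomial-1}). The only subtle step — the main obstacle — is the legitimacy of extending the summation from $k$ to $n$; but this is immediate from the convention that $\binom{2n-j}{k-j}=0$ when $k-j<0$, and the rest of the argument is routine application of the binomial theorem. As an alternative route (if one prefers to avoid formal manipulation with $(1+x)^{-n}$), the same identity follows from Chu--Vandermonde summation after writing the sum as a terminating ${}_2F_1(-n,-k;\,\alpha;1)$ with appropriate parameters, but the generating-function argument above is the cleanest.
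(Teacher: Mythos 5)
Your proof is correct, but it takes a genuinely different route from the paper. The paper also begins with the rewrite $\binom{2n-j}{2n-k}=\binom{2n-j}{k-j}$, but then applies upper negation, $\binom{2n-j}{k-j}=(-1)^{k-j}\binom{k-2n-1}{k-j}$, so that the alternating signs collapse into a single factor $(-1)^k$ and the sum becomes a Vandermonde convolution $\sum_j\binom{n}{j}\binom{k-2n-1}{k-j}=\binom{k-n-1}{k}$, which a second upper negation turns into $\binom{n}{k}$. You instead extend the summation range to $j=n$ (legitimate, since $\binom{2n-j}{k-j}=0$ for $j>k$), read the sum as the coefficient of $x^k$ in $\sum_j\binom{n}{j}(-x)^j(1+x)^{2n-j}$, and collapse it by the binomial theorem to $[x^k](1+x)^n$. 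Both arguments are complete; the paper's buys a two-line reduction to a named classical identity at the cost of the slightly fiddly sign bookkeeping of upper negation, while yours is self-contained, makes the telescoping of signs transparent, and needs only the binomial theorem --- though one can streamline your computation by writing $(-x)^j(1+x)^{2n-j}=(1+x)^n\cdot(-x)^j(1+x)^{n-j}$ and summing to $(1+x)^n\bigl((1+x)-x\bigr)^n=(1+x)^n$ directly, avoiding the formal series $(1+x)^{-n}$ altogether. Your closing remark about a terminating ${}_2F_1$ at $1$ via Chu--Vandermonde is, in substance, exactly the paper's proof.
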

\begin{proof}
The proof uses $\binom{r}{k}  = (-1)^{k} \binom{k-r-1}{k}$ to write 
\begin{equation}
\binom{2n-j}{2n-k} = \binom{2n-j}{k-j} = (-1)^{k-j} \binom{k-2n-1}{k-j} \ch
\end{equation}
\noindent
and then \eqref{binomial-1} is converted into Vandermonde identity 
\begin{equation}
\sum_{k=0}^{n} \binom{a}{k} \binom{b}{n-k} = \binom{a+b}{n}.  \ch
\end{equation}
\end{proof}

\section{Jacobi polynomials}
\label{sec-jacobi}

The Jacobi polynomials $P_{n}^{(\alpha, \beta)}(x)$, defined by 
\begin{equation}
P_{n}^{(\alpha, \beta)}(x) = \frac{1}{2^{n}} \sum_{k=0}^{n} \binom{\alpha + n}{k} 
\binom{\beta + n}{n-k}  (x-1)^{n-k} (x+1)^{k}   \ch
\label{jacobi-111}
\end{equation}
\noindent
are orthogonal on $[-1,1]$ with respect to the weight 
\begin{equation}
w(x) = (1-x)^{\alpha} (1+x)^{\beta}.
\end{equation}
\noindent
This section contains expressions for their finite Fourier transform.  The hypergeometric 
representation 
\begin{equation}
P_{n}^{(\alpha, \beta)}(x) = \frac{(\alpha + 1)_{n}}{n!} 
\pFq21{-n, \quad n + \alpha + \beta +1}{\alpha+1}{ \frac{1-x}{2} }, \ch
\end{equation}
\noindent
is used in the calculations.

\begin{theorem}
\label{fourier-jacobi}
The  finite Fourier transform of the Jacobi polynomials $P_{n}^{(\alpha, \beta)}(x)$ 
is given by 
\begin{eqnarray*}
\widehat{P_{n}^{(\alpha, \beta)}}(\la)  & = &    2e^{\imath \la}(\alpha+1)_{n} 
 \sum_{k=0}^{n} \frac{(n+\alpha+\beta+1)_{k}}{(n-k)! (\alpha+1)_{k}} 
\left[ \frac{e^{-2 \imath \la } E_{k}(2 \imath \la) - 1}{(-2 \imath \la)^{k+1}} \right]   \ch \\
 &=   & 2 \sum_{k=0}^{n}
\frac{(n+ \alpha + \beta + 1)_{k}}{(-2 \imath \la)^{k+1}(n-k)!} \ch \\
& & \times \left[
(-1)^{n-k}e^{- \imath \la} (\beta + k+1)_{n-k}  - e^{\imath \la} (\alpha+k+1)_{n-k} \right],
\end{eqnarray*}
\noindent
for $\la \neq 0$.  For $\la = 0$, 
\begin{equation}
\widehat{P_{n}^{(\alpha,\beta)}}(0) = \frac{(n+\alpha+\beta+1)}{2} 
\left[ \binom{\alpha + n}{n-1} -  (-1)^{n-1}\binom{\beta+n}{n-1} \right].
\end{equation}
\end{theorem}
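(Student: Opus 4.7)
The plan is to establish the two equivalent expressions in turn, both by starting from the hypergeometric representation of $P_{n}^{(\alpha,\beta)}$, and then to extract the value at $\la = 0$ as a separate calculation.

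\textbf{First formula.} I would substitute the hypergeometric expansion displayed just above the theorem, distribute the integral over the finite sum, and perform the change of variable $t = (1-x)/2$. Each resulting integral then reduces to $2 e^{\imath\la}\int_{0}^{1} t^{k} e^{-2\imath\la t}\,dt$, which Lemma \ref{lemma-intpower} evaluates (with $\la$ replaced by $-2\la$) in closed form in terms of $E_{k}$. Using $(-n)_{k} = (-1)^{k} n!/(n-k)!$ from \eqref{poch-112} to absorb the alternating sign into $1/(n-k)!$ produces the first stated form directly.

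\textbf{Second formula.} Starting from the first formula, I would split the bracketed factor $e^{-2\imath\la}E_{k}(2\imath\la) - 1$ into its two pieces. The $-1$ contribution combines with the Pochhammer identity $(\alpha+1)_{n}/(\alpha+1)_{k} = (\alpha+k+1)_{n-k}$ and the prefactor $e^{\imath\la}$ to yield at once the $-\,e^{\imath\la}(\alpha+k+1)_{n-k}$ summand of the target. For the remaining piece, expanding $E_{k}(2\imath\la) = \sum_{j=0}^{k} (2\imath\la)^{j}/j!$, interchanging the $j$- and $k$-summations, and reindexing by $m = k-j$ reduce the inner sum at fixed $m$ to the Gauss series
\begin{equation*}
\pFq{2}{1}{-(n-m),\, n+\alpha+\beta+m+1}{\alpha+m+1}{1},
\end{equation*}
which the Chu--Vandermonde identity evaluates to $(-1)^{n-m}(\beta+m+1)_{n-m}/(\alpha+m+1)_{n-m}$. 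Multiplying by the surviving prefactors reproduces the $(-1)^{n-k} e^{-\imath\la}(\beta+k+1)_{n-k}$ summand. This Chu--Vandermonde step is the core algebraic content of the theorem and is where I expect the main subtlety to lie.

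\textbf{Value at $\la = 0$.} Because $\widehat{P_{n}^{(\alpha,\beta)}}(\la)$ is manifestly entire, the apparent $1/\la^{k+1}$ poles of the second formula must cancel term by term. The most direct way to extract the closed form is to compute $\int_{-1}^{1} P_{n}^{(\alpha,\beta)}(x)\,dx$ from the hypergeometric representation: the monomial integrals $\int_{-1}^{1} [(1-x)/2]^{k}\,dx = 2/(k+1)$ turn the sum into $2(\alpha+1)_{n}/n!$ times a ${}_{3}F_{2}$ at unit argument, which one simplifies to the stated binomial expression by standard Pochhammer manipulations. Equivalently one can Taylor-expand $e^{\pm\imath\la}$ in the second formula, verify that the singular coefficients vanish, and read off the constant term; the bookkeeping for the cancellation of the singular terms is the main obstacle on this alternative route.
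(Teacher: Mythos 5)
Your derivation of the first formula is the same as the paper's: expand the hypergeometric representation, integrate term by term via Lemma \ref{lemma-intpower} after the substitution $t=(1-x)/2$, and absorb the sign with \eqref{poch-112}. No issues there.

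For the second formula you take a genuinely different route. The paper never touches the first formula: it uses the derivative identity \eqref{der-jacobi} and integration by parts to get the first-order recurrence $\widehat{P_{n}^{(\alpha,\beta)}}(\la)=a_{n}^{(\alpha,\beta)}(\la)-\tfrac{n+\alpha+\beta+1}{2\imath\la}\widehat{P_{n-1}^{(\alpha+1,\beta+1)}}(\la)$, iterates it, and evaluates the boundary terms from $P_{n}^{(\alpha,\beta)}(\pm1)$; the two endpoint evaluations are exactly the source of the $(\alpha+k+1)_{n-k}$ and $(\beta+k+1)_{n-k}$ factors. Your route --- split off the $-1$, expand $E_{k}$, interchange sums, reindex by $m=k-j$, and sum the inner series --- does work: the inner sum is $\pFq21{-(n-m),\,n+\alpha+\beta+m+1}{\alpha+m+1}{1}=\frac{(-n-\beta)_{n-m}}{(\alpha+m+1)_{n-m}}$ by Chu--Vandermonde, and \eqref{poch-114} together with the cancellation of $(\alpha+m+1)_{n-m}$ against $(\alpha+1)_{n}/(\alpha+1)_{m}$ gives precisely the claimed summand. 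Your version is a self-contained series manipulation; the paper's recurrence avoids the double-sum interchange and explains structurally why only data at $x=\pm1$ survives.

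The $\la=0$ part is where your plan has a real gap. The monomial integrals give $\frac{2(\alpha+1)_{n}}{n!}\,{}_{3}F_{2}(-n,\,n+\alpha+\beta+1,\,1;\,\alpha+1,\,2;\,1)$, and this series is not Saalsch\"utzian (the balance condition forces $\beta=0$), so no ``standard Pochhammer manipulation'' or classical summation theorem applies as written. What closes it is the observation that the unit numerator parameter against the denominator parameter $2$ permits an index shift, $\frac{(a)_{k}(b)_{k}}{(c)_{k}(k+1)!}=\frac{c-1}{(a-1)(b-1)}\cdot\frac{(a-1)_{k+1}(b-1)_{k+1}}{(c-1)_{k+1}(k+1)!}$, which reduces the sum to a single Chu--Vandermonde evaluation of $\pFq21{-n-1,\,n+\alpha+\beta}{\alpha}{1}$. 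The paper instead obtains the value at $0$ directly from \eqref{der-jacobi} by antidifferentiating. Be warned that carrying out either computation yields $\frac{2}{n+\alpha+\beta}\left[\binom{\alpha+n}{n+1}+(-1)^{n}\binom{\beta+n}{n+1}\right]$, which does \emph{not} agree with the value displayed in the theorem: for $\alpha=\beta=0$, $n=2$ the display gives $6$ while $\int_{-1}^{1}P_{2}(x)\,dx=0$, and for $n=1$ it gives $0$ while the integral is $\alpha-\beta$. So your claim that the ${}_{3}F_{2}$ ``simplifies to the stated binomial expression'' cannot be carried through literally; the stated expression itself appears to be in error.
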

\begin{proof}
The first statement comes from the hypergeometric form 
\begin{eqnarray}
P_{n}^{(\alpha, \beta)}(x) & = & \frac{(\alpha + 1)_{n}}{n!} 
\pFq21{-n, \quad n + \alpha + \beta +1}{\alpha+1}{ \frac{1-x}{2} }   \ch \\
 & = &  \frac{( \alpha + 1)_{n}}{n!}  
\sum_{k=0}^{n} \frac{(-n)_{k} (n + \alpha + \beta + 1)_{k}}{(\alpha + 1)_{k}  k! 2^{k}} 
( 1- x)^{k} \nonumber  \ch
\end{eqnarray}
\noindent
and  use  Lemma \ref{lemma-intpower} to produce 
\begin{equation}
\int_{-1}^{1} (1-x)^{k} e^{\imath \la x} dx = 
 -e^{\imath \la} \frac{k!}{(\imath \la)^{k+1}} \left[ e^{- 2 \imath \la} E_{k}(2 \imath \la) -1 \right]
 \end{equation}
 \noindent
 and then  $(-n)_{k} = (-1)^{k} n!/(n-k)!$ to simplify the result.

\smallskip

%
Now use  identity (the case $m=1$ of \cite[$8.961.4$]{gradshteyn-2007a}:
\begin{equation}
\label{der-jacobi}
\frac{d}{dx} P_{n}^{(\alpha, \beta)}(x) = \frac{n+\alpha + \beta +1}{2} P_{n-1}^{(\alpha + 1, \beta +1)}(x). \ch
\end{equation}
\noindent
and integrate by parts to obtain 
\begin{equation*}
\widehat{P_{n}^{(\alpha, \beta)}}(\la)  = 
 \frac{e^{\imath \la x}}{\imath \la} P_{n}^{(\alpha, \beta)}(x) 
 \Big{|}_{-1}^{1} -  
 \frac{(n+ \alpha + \beta +1)}{2 \imath  \la}
 \widehat{P_{n-1}^{(\alpha + 1, \beta + 1)}}(\la).
 \end{equation*}
 Introduce the notation for the boundary term 
 \begin{equation}
a_{n}^{(\alpha, \beta)} =  \frac{e^{\imath \la x}}{\imath \la} P_{n}^{(\alpha, \beta)}(x) \Big{|}_{-1}^{1}.
\end{equation}
\noindent
to write the previous computation as the recurrence 
\begin{equation}
\widehat{P_{n}^{(\alpha, \beta)}}(\la) = a_{n}^{(\alpha, \beta)}(\la) - 
\frac{(n + \alpha + \beta +1)}{2  \imath \la} \widehat{P_{n-1}^{(\alpha + 1, \beta +1)}}(\la).
\end{equation}
\noindent
Iteration yields 
\begin{eqnarray*}
\widehat{P_{n}^{(\alpha, \beta)}}(\la) & = & \sum_{k=1}^{n}  (-1)^{n-k}
\frac{(n + \alpha + \beta+1)_{n-k}}{(2 \imath \la)^{n-k}} a_{k}^{(\alpha +n-k, \beta + n-k)}(\la) \\
& + & (-1)^{n} \frac{(n + \alpha + \beta +1)_{n}}{(2 \imath \la)^{n}} \widehat{P_{0}^{(\alpha + n, \beta + n)}}(\la).
\end{eqnarray*}
\noindent
Evaluate the last term is evaluated as $a_{0}^{(\alpha,\beta)}(\la)$ and use 
\begin{equation}
P_{n}^{(\alpha, \beta)}(1) = \binom{\alpha+n}{n} \text{ and }
P_{n}^{(\alpha, \beta)}(-1) = (-1)^{n} \binom{\beta +n}{n} \ch
\end{equation}
\noindent
from \eqref{jacobi-111} to obtain 
\begin{equation}
a_{n}^{(\alpha, \beta)} = \frac{1}{\imath \la} \left[ e^{\imath \la} \binom{\alpha+n}{n} - 
(-1)^{n} e^{- \imath \la} \binom{\beta+n}{n} \right].
\label{boundary-1}
\end{equation}
\noindent
Some algebraic simplification now gives the stated result.   The value for $\la = 0$ comes 
directly from \eqref{der-jacobi}.
\end{proof}

The next statement represents a hypergeometric rewrite of the last 
formula in Theorem \ref{fourier-jacobi}.

\begin{Thm}
The finite Fourier transform of the Jacobi polynomial is given by
\begin{eqnarray*}
\widehat{P_{n}^{(\alpha, \beta)}}(\la) & = & 
\frac{( \beta+1)_{n}}{\imath \la n!} 
(-1)^{n+1} e^{- \imath \la}  \pFq31{n+ \alpha+ \beta + 1,  -n, 1}{\beta + 1}{\frac{-1}{2 \imath \la}} +  \ch \\
& & \qquad + \frac{( \alpha+1)_{n}}{\imath \la n!} 
e^{\imath \la}  \pFq31{n+ \alpha + \beta +1,  -n, 1}{\alpha + 1}{\frac{1}{2 \imath \la}}.
\end{eqnarray*}
\end{Thm}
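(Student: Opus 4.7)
The plan is to start from the second formula of Theorem \ref{fourier-jacobi}, namely
\begin{equation*}
\widehat{P_{n}^{(\alpha, \beta)}}(\la) = 2 \sum_{k=0}^{n}
\frac{(n+\alpha+\beta+1)_{k}}{(-2 \imath \la)^{k+1}(n-k)!}
\left[(-1)^{n-k}e^{-\imath \la}(\beta+k+1)_{n-k} - e^{\imath \la}(\alpha+k+1)_{n-k}\right],
\end{equation*}
split the bracket, and reorganize each of the two resulting sums into a ${}_3F_1$. The main algebraic tools are the standard Pochhammer identities
\begin{equation*}
(\gamma + k + 1)_{n-k} = \frac{(\gamma+1)_{n}}{(\gamma+1)_{k}}, \qquad \frac{1}{(n-k)!} = \frac{(-1)^{k}(-n)_{k}}{n!},
\end{equation*}
valid for $0 \leq k \leq n$, together with the trivial $(1)_{k}/k! = 1$ that supplies the third upper parameter.

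First I would treat the $\alpha$-piece. Factoring $e^{\imath\la}$ and the $k=0$ prefactor $1/(-2\imath\la)$ out of the sum, replacing $(\alpha+k+1)_{n-k}$ by $(\alpha+1)_{n}/(\alpha+1)_{k}$, and using $1/(n-k)! = (-1)^{k}(-n)_{k}/n!$, the remaining $k$-sum becomes
\begin{equation*}
\sum_{k=0}^{n}\frac{(n+\alpha+\beta+1)_{k}(-n)_{k}(1)_{k}}{(\alpha+1)_{k}\, k!}\left(\frac{1}{2\imath\la}\right)^{k},
\end{equation*}
since the signs $(-1)^{k}$ coming from $(-n)_{k}$ combine with the $(-2\imath\la)^{-k}$ to produce $(2\imath\la)^{-k}$. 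Putting the constants back in front reproduces the second ${}_3F_1$ of the theorem, with argument $1/(2\imath\la)$ and multiplicative factor $e^{\imath\la}(\alpha+1)_{n}/(\imath\la\,n!)$.

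The $\beta$-piece is handled identically, but now the extra $(-1)^{n-k}$ in front is carried along. Exactly the same Pochhammer rearrangement yields
\begin{equation*}
-\frac{(-1)^{n}e^{-\imath\la}(\beta+1)_{n}}{\imath\la\, n!}\sum_{k=0}^{n}\frac{(n+\alpha+\beta+1)_{k}(-n)_{k}(1)_{k}}{(\beta+1)_{k}\, k!}\left(\frac{-1}{2\imath\la}\right)^{k},
\end{equation*}
where this time the $(-1)^{k}$ arising from the factorial identity combines with the remaining $(-1)^{-k}$ (originally from $(-1)^{n-k}$) and with $(-2\imath\la)^{-k}$ to give the argument $-1/(2\imath\la)$. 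The overall sign $-(-1)^{n}=(-1)^{n+1}$ matches the stated coefficient.

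There is no real obstacle here; the only thing to watch is the careful bookkeeping of the signs produced by three separate sources in each term, namely $(-n)_{k}$, the $(-1)^{n-k}$ inside the bracket (for the $\beta$-sum), and the negative powers of $-2\imath\la$. Once these are reconciled and the Pochhammer shift $(\gamma+k+1)_{n-k} = (\gamma+1)_{n}/(\gamma+1)_{k}$ is applied, both ${}_3F_1$ series appear simultaneously and the formula follows.
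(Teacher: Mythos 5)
Your proposal is correct and follows essentially the same route as the paper: both start from the last formula of Theorem \ref{fourier-jacobi}, apply the shift $(\gamma+k+1)_{n-k}=(\gamma+1)_n/(\gamma+1)_k$ together with $1/(n-k)!=(-1)^k(-n)_k/n!$, and resum each piece into a ${}_3F_1$ with argument $\pm 1/(2\imath\la)$. The sign bookkeeping you describe checks out, so nothing further is needed.
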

\begin{proof}
The first term in the expression the last formula of Theorem \ref{fourier-jacobi} is simplified using 
\eqref{poch-113} and 
$(\beta + k + 1)_{n-k} = \frac{(\beta+1)_{n}}{(\beta+1)_{k}} $
to obtain
\begin{multline*}
 \frac{(-1)^{n-k} (n+ \alpha + \beta+1)_{k} (\beta+k+1)_{k}}
{(-2 \imath \la)^{k+1} (n-k)!}  =  \\
\frac{(-1)^{n+1} (\beta+1)_{n}}{2  \imath \la} 
\frac{(n + \alpha + \beta +1)_{k} (-n)_{k} (1)_{k}}{(\beta+1)_{k}} 
\frac{t^{k}}{k!}
\end{multline*}
\noindent
with $t = -1/2 \imath \la$. Summing from $k=0$ to $n$ gives the  first term
in the answer. A similar argument simplifies  the second term in Theorem 
\ref{fourier-jacobi}. 
\end{proof}

%
%
%

\begin{Note}
Define 
\begin{equation}
A_{n}^{(a,b)}(t) = \frac{(a+1)_{n}}{n!} 
\pFq31{n+a+b+1,-n,1}{a+1}{ \frac{1}{t}}.
\end{equation}
then the finite Fourier transform of the Jacobi 
polynomial $P_{n}^{(\alpha, \beta)}(x)$ is given by
\begin{equation}
\widehat{P_{n}^{(\alpha,\beta)}}(\la) = 
\frac{1}{\imath \la} \left[ (-1)^{n+1}e^{- \imath \la} A_{n}^{(\beta,\alpha)}(-2 \imath \la) 
+ e^{\imath \la} A_{n}^{(\alpha, \beta)}(2 \imath \la) \right].
\end{equation}
\end{Note}

\section{A collection of special examples}
\label{sec-examples}

This section presents a collection of special cases of the Jacobi polynomials 
and their respective finite Fourier transforms.

\subsection{ Legendre polynomials}

These polynomials were discussed in Section \ref{sec-jacobi} and correspond to 
the special case $\alpha = \beta =0$; that is, 
\begin{equation}
P_{n}(x) = P_{n}^{(0,0)}(x).  \ch
\end{equation}

The first formula in Theorem \ref{fourier-jacobi} reproduces the third formula in 
Theorem \ref{fourier-legendre}. Similarly, the second formula in Theorem \ref{fourier-jacobi}
gives the last expression for the finite Fourier transform of Legendre polynomials in 
Theorem \ref{fourier-legendre}. 

\subsection{Gegenbauer polynomials}
\label{sec-gegenbauer}

These polynomials are also special cases of $P_{n}^{(\alpha, \beta)}(x)$:
\begin{equation}
C_{n}^{(\nu)}(x) = \frac{(2 \nu)_{n}}{(\nu + 1/2)_{n}} P_{n}^{(\nu - 1/2, \nu- 1/2)}(x). \ch
\end{equation}

\begin{theorem}
The finite Fourier transform of the Gegenbauer polynomial $C_{n}^{(\nu)}(x)$ is given by
\begin{eqnarray*}
\widehat{C_{n}^{(\nu)}}(\la) & = & 
2  (2 \nu)_{n} e^{\imath \la} 
\sum_{k=0}^{n}  2^{2k}
\frac{(n + 2 \nu)_{k} (\nu)_{k}}{(n-k)! (2 \nu)_{2k}}
\left[\frac{ e^{-2 \imath \la} E_{k}(2 \imath \la) - 1}{(-2 \imath \la )^{k+1}}
\right]   \ch \\
& = & 
\frac{ 2 (2 \nu)_{n} (\nu)_{n}}{( 2 \nu)_{2n}} 
 \sum_{k=0}^{n} 2^{2k} \frac{(n + 2 \nu)_{k} (2 \nu + 2k)_{2n-2k}}{(n-k)! (\nu + k)_{n-k}} 
\left[ \frac{(-1)^{n-k} e^{- \imath \la} - e^{\imath \la} }{(- 2 \imath \la)^{k+1}} \right]  
\ch 
\end{eqnarray*}
\noindent
and also 
\begin{multline*}
\widehat{C_{n}^{(\nu)}}(\la) =  \frac{(2 \nu)_{n}}{\imath \la n!} \times 
\left[ (-1)^{n+1} e^{- \imath \la} \pFq31{n+ 2 \nu, -n, 1}{\nu + \tfrac{1}{2}}{- \frac{1}{2 \imath \la} } + \right. \\
 \left. e^{\imath \la} \pFq31{n+ 2 \nu, -n, 1}{\nu + \tfrac{1}{2}}{\frac{1}{2 \imath \la} } \right].
 \end{multline*}
 \end{theorem}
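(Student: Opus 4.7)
The plan is to derive all three formulas by substitution, using the established relationship
\[
C_{n}^{(\nu)}(x) = \frac{(2\nu)_{n}}{(\nu+\tfrac{1}{2})_{n}} P_{n}^{(\nu-\tfrac{1}{2},\,\nu-\tfrac{1}{2})}(x),
\]
so that $\widehat{C_{n}^{(\nu)}}(\la) = \frac{(2\nu)_{n}}{(\nu+\tfrac{1}{2})_{n}} \widehat{P_{n}^{(\nu-1/2,\nu-1/2)}}(\la)$, together with the two expressions for $\widehat{P_{n}^{(\alpha,\beta)}}(\la)$ in Theorem \ref{fourier-jacobi} and its hypergeometric rewrite. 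In each case I would set $\alpha = \beta = \nu - \tfrac{1}{2}$, which collapses $\alpha + \beta + 1$ to $2\nu$ and makes $(\alpha+1)_{n} = (\beta+1)_{n} = (\nu+\tfrac{1}{2})_{n}$.

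For the first formula I would take the first expression from Theorem \ref{fourier-jacobi}. After the substitution, the overall factor becomes $2(2\nu)_{n}e^{\imath\la}$ and the summand carries the reciprocal Pochhammer $1/(\nu+\tfrac{1}{2})_{k}$. The duplication identity \eqref{poch-3} with $a=\nu$ gives
\[
(\nu+\tfrac{1}{2})_{k} = \frac{(2\nu)_{2k}}{2^{2k}(\nu)_{k}},
\]
which immediately produces the factor $2^{2k}(\nu)_{k}/(2\nu)_{2k}$ appearing in the target. For the third (hypergeometric) formula, since $\alpha = \beta$ the two ${}_3F_1$ sums in the statement preceding the Gegenbauer theorem have identical parameters, and the resulting combined expression is exactly the ${}_3F_1$ representation claimed.

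The second formula is the step requiring the most care. Starting from the second expression in Theorem \ref{fourier-jacobi}, the symmetry $\alpha=\beta=\nu-\tfrac12$ allows the Pochhammer $(\beta+k+1)_{n-k} = (\alpha+k+1)_{n-k}$ to be pulled out as a common factor, leaving the bracket $(-1)^{n-k}e^{-\imath\la} - e^{\imath\la}$. The remaining task is to rewrite
\[
\frac{(\nu+k+\tfrac12)_{n-k}}{(\nu+\tfrac12)_{n}}
\]
in the desired form $\dfrac{2^{2k}(\nu)_{n}(2\nu+2k)_{2n-2k}}{(2\nu)_{2n}(\nu+k)_{n-k}}$. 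This is where I expect the main bookkeeping obstacle: I would apply \eqref{poch-3} once with $a=\nu+k$ to rewrite the numerator, and once with $a=\nu$ to rewrite the denominator, then match powers of two to verify the exponent $2^{2k}$. Once this Pochhammer identity is in hand, multiplying by $(2\nu)_{n}/(\nu+\tfrac12)_{n}$ assembles the claimed prefactor $2(2\nu)_{n}(\nu)_{n}/(2\nu)_{2n}$ and the claim follows.

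In summary, no new analysis is required beyond Theorem \ref{fourier-jacobi} and its ${}_3F_1$ reformulation; the whole proof is a specialization together with two applications of the duplication formula \eqref{poch-3}, with the second formula requiring the slightly delicate Pochhammer rearrangement described above.
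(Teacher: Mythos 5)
Your proposal is correct and matches the paper's (implicit) approach exactly: the Gegenbauer theorem is presented in the paper as a specialization of Theorem \ref{fourier-jacobi} and its ${}_3F_1$ rewrite via $C_{n}^{(\nu)}(x) = \frac{(2\nu)_{n}}{(\nu+1/2)_{n}} P_{n}^{(\nu-1/2,\nu-1/2)}(x)$, with the duplication formula \eqref{poch-3} handling the Pochhammer conversions. Your verification of the key identity $\frac{(\nu+k+\tfrac12)_{n-k}}{(\nu+\tfrac12)_{n}} = \frac{2^{2k}(\nu)_{n}(2\nu+2k)_{2n-2k}}{(2\nu)_{2n}(\nu+k)_{n-k}}$ by two applications of \eqref{poch-3} is exactly the needed bookkeeping, and it checks out.
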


\subsection{Chebyshev polynomials}
\label{sec-chebyshev}

The Chebyshev polynomial are related to Gegenbauer polynomials by 
\begin{equation}
U_{n}(x)  = C_{n}^{(1)}(x)  \text{ and }
T_{n}(x) = \lim\limits_{\nu \to 0} \frac{n C_{n}^{(\nu)}(x)}{2 \nu}, \text{ for } 
n \geq 1.  \ch
\end{equation}
\noindent 
These formulas are now used to evaluate the finite Fourier transform of 
Chebyshev polynomials.

\begin{Thm}
The finite Fourier transform of the Chebyshev polynomial is given by 
\begin{eqnarray*}
\widehat{U_{n}}(\la)  & = & 
e^{\imath \la} \sum_{k=0}^{n} 2^{2k+1}k! \binom{n+k+1}{n-k} 
\left[ \frac{e^{-2 \imath \la}E_{k}(2 \imath \la) -1}{(- 2 \imath \la)^{k+1}} \right]  \ch \\
 & = & \sum_{k=0}^{n}  \frac{2^{2k+1}(n+k+1)! \, k!}{(2k+1)! \, (n-k)!} 
\frac{ \left[ (-1)^{n-k} e^{ - \imath \la} - e^{\imath \la} \right]}{( - 2 \imath \la)^{k+1}}  \ch 
\end{eqnarray*}
\noindent
and 
\begin{eqnarray*}
\widehat{T_{n}}(\la) & = & 
\sum_{k=0}^{n} (-1)^{k+1}\frac{n2^{k} (n+k)! k!}{(n-k)! (2k)! (n+k)} 
\frac{ \left[(-1)^{n-k} e^{- \imath \la} - e^{\imath \la} \right]}{( \imath \la)^{k+1}} 
\end{eqnarray*}
\end{Thm}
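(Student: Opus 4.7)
The plan is to derive both formulas by specializing the Gegenbauer transform from Subsection \ref{sec-gegenbauer}, exploiting the identities $U_n(x) = C_n^{(1)}(x)$ and $T_n(x) = \lim_{\nu\to 0} n C_n^{(\nu)}(x)/(2\nu)$ stated just above. For $\widehat{U_n}(\la)$ I would substitute $\nu=1$ directly into the first and second Gegenbauer formulas. The Pochhammer symbols collapse as $(2)_n=(n+1)!$, $(1)_k=k!$, $(2)_{2k}=(2k+1)!$, $(n+2)_k=(n+k+1)!/(n+1)!$, $(2+2k)_{2n-2k}=(2n+1)!/(2k+1)!$ and $(1+k)_{n-k}=n!/k!$, and recognizing $\binom{n+k+1}{n-k}=(n+k+1)!/((n-k)!(2k+1)!)$ produces the two stated expressions for $\widehat{U_n}(\la)$. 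This amounts to bookkeeping.

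For $\widehat{T_n}(\la)$ the real work is the limit $\widehat{T_n}(\la) = \lim_{\nu\to 0} n \widehat{C_n^{(\nu)}}(\la)/(2\nu)$ applied to the second Gegenbauer formula. As $\nu\to 0$ each of $(2\nu)_n$, $(\nu)_n$ and $(2\nu)_{2n}$ is of order $\nu$, so the prefactor $2(2\nu)_n(\nu)_n/(2\nu)_{2n}$ is itself of order $\nu$ and the explicit $1/(2\nu)$ exactly compensates. Before passing to the limit I would factor $(2\nu)_{2n}=(2\nu)_{2k}(2\nu+2k)_{2n-2k}$ and $(\nu)_n=(\nu)_k(\nu+k)_{n-k}$ inside the sum, so that the product of the prefactor and the $k$-th term simplifies to $2(2\nu)_n(\nu)_k 2^{2k}(n+2\nu)_k/((2\nu)_{2k}(n-k)!)$ times the $\la$-dependent bracket. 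In this form, for $k\ge 1$ the ratio $(\nu)_k/(2\nu)_{2k}$ tends to $(k-1)!/(2(2k-1)!)$, while for $k=0$ every Pochhammer factor reduces to $1$ and a separate limit $2\,[\ldots]_0$ emerges.

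After the limit, each $k\ge 1$ term carries prefactor $n\cdot 2^{2k}(k-1)!(n+k-1)!/((2k-1)!(n-k)!)$ and the $k=0$ term contributes $2[\ldots]_0$. To match the stated formula I would apply $(k-1)!/(2k-1)! = 2\,k!/(2k)!$, split $1/(-2\imath\la)^{k+1}=(-1)^{k+1}/(2^{k+1}(\imath\la)^{k+1})$ and use $(n+k-1)! = (n+k)!/(n+k)$; the resulting expression should agree with the $k=0$ contribution computed separately, providing an internal consistency check, and a further sanity check against $\widehat{T_1}(\la) = 2\imath(\sin\la - \la\cos\la)/\la^2$ (computed directly from Lemma~\ref{lemma-intpower}) confirms the small-$n$ case. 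The main obstacle is the careful bookkeeping of the degenerate factors of $\nu$: in particular, ensuring that the $k=0$ term survives despite the naive vanishing $(2\nu+2k)_{2n-2k}\big|_{\nu=0,\,k=0}=(0)_{2n}=0$, which is only cancelled by the matching $1/(2\nu)_{2n}$ hidden in the Gegenbauer prefactor.
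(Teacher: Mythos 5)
Your proposal is correct and follows exactly the route the paper intends: the theorem is stated right after the relations $U_n = C_n^{(1)}$ and $T_n = \lim_{\nu\to 0} nC_n^{(\nu)}/(2\nu)$, and the paper (which gives no written proof) clearly means for the formulas to be obtained by specializing the Gegenbauer transforms, as you do. Your Pochhammer simplifications for $\nu=1$ and your handling of the order-$\nu$ degeneracies in the $\nu\to 0$ limit (including the separate $k=0$ term) all check out against the stated formulas.
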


\section{Biorthogonality for the Jacobi polynomials}
\label{sec-biorthogonality}

The sequence of functions $\{ \frac{1}{\sqrt{2}} e^{\pi \imath j x}: \, j \in \mathbb{Z} \}$ forms an 
orthonormal  family on the Hilbert space $L^{2}[-1,1]$. Therefore, every continuous function  $f$
defined on $[-1,1]$ may be expanded in the form 
\begin{equation}
f(x) = \frac{1}{\sqrt{2}} \sum_{j= - \infty}^{\infty} a_{j}(f) e^{\pi \imath j x},
\end{equation}
\no indent
where the Fourier coefficients are given by 
\begin{equation}
a_{j}(f) = \frac{1}{\sqrt{2}} \int_{-1}^{1} f(x) e^{-\pi \imath j x} \, dx.
\end{equation}
Parseval's identity \cite[Theorem $14$]{hardy-1950a}  states that 
\begin{equation}
\int_{-1}^{1} f(x) \overline{g(x)} \, dx = \sum_{j=-\infty}^{\infty} a_{j}(f) \overline{a_{j}(g)}.
\end{equation}

This identity is now made explicit for the case 
\begin{equation}
f(x) = P_{n}^{(\alpha,\beta)}(x) \text{ and } g(x) = Q_{n}^{(\alpha,\beta)}(x) := 
(1-x)^{\alpha}(1+x)^{\beta} P_{n}^{(\alpha,\beta)}(x). 
\end{equation}

The Fourier coefficients $a_{j}(Q_{m}^{(\alpha,\beta)}(x))$  are given in 
\eqref{jacobi-101} and $a_{j}(P_{n}^{(\alpha,\beta)}(x))$ have been evaluated in Theorem \ref{fourier-jacobi}. Parseval's identity 
and the orthogonality of Jacobi polynomials give
\begin{eqnarray*}
\sum_{j = -\infty}^{\infty} a_{j}(P_{n}^{(\alpha,\beta)}(x)) \overline{  a_{j}(Q_{m}^{(\alpha,\beta)}(x)) }
& = & \frac{2^{\alpha + \beta +1} \Gamma(n + \alpha + 1) \Gamma(n + \beta+1)}{
(2n+\alpha+\beta+1) n! \Gamma(n+ \alpha + \beta + 1)}  \delta_{n,m}, \nonumber 
\end{eqnarray*}
\noindent
where $\delta_{n,m}$ is Kronecker's delta ($1$ if $n=m$ and $0$ if $n \neq m$).  Only the case 
$n \neq m$ leads to an interesting relation.  A direct calculation shows that $a_{0}(Q_{m}^{(\alpha, \beta)}(x) = 0$, so that 
Parseval's identity is written as 
\begin{eqnarray*}
\sum_{j \in \mathbb{Z} , j \neq 0}  a_{j}(P_{n}^{(\alpha,\beta)}(x)) \overline{  a_{j}(Q_{m}^{(\alpha,\beta)}(x)) } = 0, \quad 
\text{ for } n \neq m.
 \end{eqnarray*}

To simplify the previous relation, replace $\la = - \pi j$ in \eqref{jacobi-101} and use  Kummer's identity 
\begin{equation}
\pFq11{u}{u+v}{z} = e^{z} \pFq11{v}{u+v}{-z}
\end{equation}
\noindent
to obtain 
\begin{multline*}
\overline{ a_{j}(Q_{m}^{(\alpha, \beta)})} = 
\frac{(-1)^{j} j^{m}}{m!} 2^{m + \alpha + \beta + 1/2} B(m + \alpha + 1, m + \beta + 1)  \\
\pFq11{m+\beta +1}{2m + \alpha + \beta + 2}{ 2 \pi \imath j}. 
\end{multline*}
Similiarly,  \eqref{fourier-jacobi} with $\la = - \pi j$ gives 
\begin{multline*}
a_{j}(P_{n}^{(\alpha, \beta)}) = \frac{(-1)^{j}}{2 \pi \imath j n!}  \\
\left[ (-1)^{n} (\beta+1)_{n}  \,\, \pFq31{ n + \alpha + \beta + 1, -n, 1}{\beta +1 }{ \frac{1}{2 \pi \imath j}} - \right.  \\
\left.  (\alpha+1)_{n} \,\, \pFq31{ n + \alpha + \beta + 1, -n, 1}{\alpha +1 }{ -\frac{1}{2 \pi \imath j}}  \right].
 \end{multline*}
 Parseval's identity now produces the next result. 
 
 \begin{theorem}
 Define 
 \begin{equation*}
 W_{n,m}^{(\alpha,\beta)}(t;j) = (\alpha +1)_{n} j^{m-1}
 \pFq31{n+ \alpha + \beta + 1,-n,1}{\alpha + 1}{ \frac{1}{t}} 
 \pFq11{m + \alpha + 1}{2m + \alpha + \beta + 2}{t}.
 \end{equation*}
 \noindent
 Then 
 \begin{equation}
 (-1)^{n} \sum_{j \in \mathbb{Z}, \, j \neq 0} W_{n,m}^{(\beta, \alpha)}(2 \pi \imath j;j)= 
(-1)^{m-1}  \sum_{j \in \mathbb{Z}, \, j \neq 0} W_{n,m}^{(\alpha,\beta)}(2 \pi \imath j;j).
 \end{equation}
 In particular, if $n$ and $m$ have opposite parity, then 
 \begin{equation}
  \sum_{j \in \mathbb{Z}, \, j \neq 0} W_{n,m}^{(\beta, \alpha)}(2 \pi \imath j;j) = 
    \sum_{j \in \mathbb{Z}, \, j \neq 0} W_{n,m}^{(\alpha,\beta)}(2 \pi \imath j;j).
 \end{equation}
 \end{theorem}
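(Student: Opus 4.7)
My plan is to derive the theorem as a direct consequence of Parseval's identity, which in the excerpt has already been specialized to
\[
\sum_{j \in \mathbb{Z},\, j \neq 0} a_{j}(P_{n}^{(\alpha,\beta)}) \,\overline{a_{j}(Q_{m}^{(\alpha,\beta)})} = 0, \qquad n \neq m,
\]
and to feed into it the explicit Fourier coefficients $a_{j}(P_{n}^{(\alpha,\beta)})$ and $\overline{a_{j}(Q_{m}^{(\alpha,\beta)})}$ displayed in the paragraphs immediately preceding the theorem. After combining $(-1)^{j}(-1)^{j} = 1$, writing $j^{m}/j = j^{m-1}$, and dividing out the nonzero numerical prefactor involving the beta function, Parseval's identity reduces to the linear relation
\[
(-1)^{n} (\beta+1)_{n}\,\Sigma_{1} \;=\; (\alpha+1)_{n}\,\Sigma_{2},
\]
where $\Sigma_{1}$ and $\Sigma_{2}$ are the two sums over $j \neq 0$ generated by the two ${}_{3}F_{1}$ summands hidden inside $a_{j}(P_{n}^{(\alpha,\beta)})$. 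Both share the common factor $j^{m-1}\,\pFq11{m+\beta+1}{2m+\alpha+\beta+2}{2\pi\imath j}$, while the ${}_{3}F_{1}$ of $\Sigma_{1}$ has lower parameter $\beta+1$ at argument $+1/(2\pi\imath j)$, and the ${}_{3}F_{1}$ of $\Sigma_{2}$ has lower parameter $\alpha+1$ at argument $-1/(2\pi\imath j)$.

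The next step is to identify each $\Sigma_{i}$ with the corresponding $W$-sum in the theorem. By the very definition of $W$, the first identification $(\beta+1)_{n}\,\Sigma_{1} = \sum_{j \neq 0} W_{n,m}^{(\beta,\alpha)}(2\pi\imath j;\,j)$ is immediate. The second identification is the technical heart of the argument, since $W_{n,m}^{(\alpha,\beta)}(2\pi\imath j;\,j)$ carries a ${}_{3}F_{1}$ at argument $+1/(2\pi\imath j)$ together with a ${}_{1}F_{1}$ of upper parameter $m+\alpha+1$, neither of which matches the summand of $\Sigma_{2}$ directly. To bridge the gap I would perform the change of index $j \mapsto -j$ (permissible since the range is all of $\mathbb{Z}\setminus\{0\}$); this flips both hypergeometric arguments and contributes $(-1)^{m-1}$ from $(-j)^{m-1}$. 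Then I invoke Kummer's identity $\pFq11{u}{u+v}{z} = e^{z}\pFq11{v}{u+v}{-z}$, which converts the ${}_{1}F_{1}$ upper parameter from $m+\alpha+1$ to $m+\beta+1$; crucially, the exponential factor $e^{\pm 2\pi\imath j}$ equals $1$ for integer $j$ and therefore disappears. The net effect is $(\alpha+1)_{n}\,\Sigma_{2} = (-1)^{m-1}\sum_{j \neq 0} W_{n,m}^{(\alpha,\beta)}(2\pi\imath j;\,j)$.

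Substituting both identifications into $(-1)^{n}(\beta+1)_{n}\,\Sigma_{1} = (\alpha+1)_{n}\,\Sigma_{2}$ yields the first identity of the theorem. The ``in particular'' corollary is then immediate, since opposite parity of $n$ and $m$ forces $(-1)^{n} = (-1)^{m-1}$, and the common sign cancels on both sides. The one delicate step is the $j \mapsto -j$ manoeuvre combined with the clean collapse of Kummer's identity on the integer lattice; the rest is direct bookkeeping on top of the Fourier coefficient formulas already derived and Parseval's theorem.
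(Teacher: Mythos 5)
Your proposal is correct and follows essentially the same route as the paper, which obtains the theorem by inserting the explicit formulas for $a_{j}(P_{n}^{(\alpha,\beta)})$ and $\overline{a_{j}(Q_{m}^{(\alpha,\beta)})}$ into the reduced Parseval relation $\sum_{j\neq 0} a_{j}(P_{n}^{(\alpha,\beta)})\,\overline{a_{j}(Q_{m}^{(\alpha,\beta)})}=0$ for $n\neq m$ and dividing out the constant prefactors. The one step the paper leaves implicit --- reindexing $j\mapsto -j$ in the second term and applying Kummer's identity, with $e^{\pm 2\pi\imath j}=1$, to recognize it as $(-1)^{m-1}\sum_{j\neq 0}W_{n,m}^{(\alpha,\beta)}(2\pi\imath j;j)$ --- you carry out explicitly and correctly.
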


\section{An operator point of view}
\label{sec-operator}

To obtain the  finite Fourier transform of a polynomial start with 
\begin{equation}
\int_{-1}^{1} x^{k} e^{\imath \la x} \, dx = (-\imath D)^{k} (2 \text{ sinc }\la)
\end{equation}
\noindent
where the \textit{sinc} function is 
\begin{equation}
\text{sinc }\la = \frac{\sin \la}{\la}
\end{equation}
and $D = \frac{d}{d \la}$. The action is extended by linearity to obtain
\begin{equation}
\widehat{P}(\la) = P(- \imath D) (2 \text{ sinc }\la).
\end{equation}

For instance, for the Chebyshev polynomial 
\begin{equation}
U_{n}(x) = \sum_{k=0}^{n} (-2)^{k} \binom{n+k+1}{n-k} (1-x)^{k} 
\end{equation}
\noindent
leads to 
\begin{eqnarray}
\widehat{U_{n}}(\la) & = & \sum_{k=0}^{n} (-2)^{k} \binom{n+k+1}{n-k} 
(1 + \imath D)^{k} (2 \text{ sinc  }\la)  \\
& = & U_{n}(- \imath D) ( 2 \text{ sinc }\la). \nonumber 
\end{eqnarray}

It is elementary to check that 
\begin{equation}
\left( \frac{d}{d \la} \right)^{n}  \text{ sinc }\la = 
A_{n}(\la) \sin \la + B_{n}(\la) \cos \la
\end{equation}
\noindent
where $A_{n}, \, B_{n}$ are polynomials in $1/\la$ that satisfy the recurrences
\begin{eqnarray*}
A_{n+1}(\la) & = & A_{n}'(\la) - B_{n}(\la) \\
B_{n+1}(\la) & = & A_{n}(\la) + B_{n}'(\la),
\end{eqnarray*}
\noindent
with initial values $A_{0}(\la) = 1/\la$ and $B_{0}(\la) = 0$.  An explicit expression for these 
polynomials can be obtain from 
\begin{equation}
\left( \frac{d}{d \la} \right)^{n}  \text{ sinc }\la = 
\sum_{j=0}^{n} \frac{n!}{(n-j)!} \frac{\sin( \la + (n+j) \tfrac{\pi}{2})}{\la^{j+1}}.
\end{equation}
 Details of this 
approach to finite Fourier transform of orthogonal polynomials will be 
given elsewhere.

\medskip

\noindent
{\bf Acknowledgments}. 
The authors wish to thank A.~Fokas, T. Koorwinder and T. Amdeberhan for 
discussions on the topic presented here. The work of the fourth author was partially funded by
$\text{NSF-DMS } 1112656$.  The first author is a postdoctoral fellow and the second 
author is a graduate student partially funded by the same grant.  \\

\bigskip

\end{document}